\documentclass[12pt]{amsart}
\usepackage{amsmath}
\usepackage{amssymb}
\usepackage{amsthm}
\usepackage{a4wide}
\usepackage{xcolor}
\usepackage{tikz}
\newtheorem{thm}{Theorem}
\newtheorem{lem}[thm]{Lemma}

\parindent0em
\parskip1ex

\begin{document}

\title{Fair partitions of the plane into incongruent pentagons}
\author{Dirk Frettl{\"o}h}
\address{Faculty of Technology, Bielefeld University, 33501 Bielefeld, Germany}
\author{Christian Richter}
\address{Institute for Mathematics, Friedrich Schiller University, 07737 Jena, Germany}
\date{\today}

\begin{abstract}
Motivated by a question of R.\ Nandakumar, we show that the Euclidean plane can be dissected into mutually incongruent convex pentagons of the same area and the same perimeter.
\end{abstract}

\subjclass[2010]{52C20 (primary); 05B45; 52A38 (secondary).}
\keywords{Equipartition; fair partition; tiling; dissection; tiling a tile; regular hexagon; pentagon}

\maketitle


\section{Background and main result}

In his fruitful mathematical blog R.\ Nandakumar posed the following question  \cite[post of December 10, 2014]{nblog2}. \emph{Can the plane $\mathbb{R}^2$ be tiled by triangles of same 
area and perimeter such that no two triangles are congruent to each other?} In the present paper a tiling of $\mathbb{R}^2$ always means a family of mutually non-overlapping polygons whose union is $\mathbb{R}^2$. We call a tiling fair if all its members have the same area and perimeter (cf. \cite{dmo}). Congruence is meant with respect to Euclidean isometries including reflections.
Kupavskii, Pach and Tardos \cite{kpt2} have shown that the answer to the above question is negative. 
Frettl\"oh \cite{f}, the first named author of the present paper, had already noted 
that the answer is negative when the tilings are vertex-to-vertex, i.e., when the intersection of any two tiles is either a common side, a common vertex, or empty.

Nandakumar proposed to weaken the condition of equality of perimeters \cite{nblog2}, and Frettl\"oh (and in parts also Nandakumar) opened the question by also considering tilings with convex $n$-gons, $n=4,5,6$ \cite{f}. This has triggered the construction of several families of tilings by mutually incongruent convex polygons of equal area \cite{f,kpt1,fr1,fr2}. See also Nandakumar's post of August 2, 2021, concerning tilings of the plane as well as the link there to previous posts \cite{nblog2}.

Given the non-existence of fair tilings of $\mathbb{R}^2$ by incongruent triangles from \cite{kpt2}, the authors of the present note constructed a fair tiling by incongruent convex quadrangles \cite{fr2}, which seems to be the first example of a fair tiling of $\mathbb{R}^2$ by convex $n$-gons. Here we solve the case $n=5$.  

%
%

\begin{thm} \label{thm:pentagons}
There is a fair tiling of $\mathbb{R}^2$ by pairwise incongruent convex pentagons.
\end{thm}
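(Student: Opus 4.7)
The plan is to realize the pentagon tiling as a refinement of an auxiliary tiling by convex hexagons. Concretely, I would tile $\mathbb{R}^2$ by convex hexagons $H_k$, all of the same area $2A$, and then split each $H_k$ by a single straight segment $\gamma_k$ into two convex pentagons. For the two pentagons inside $H_k$ to have equal area, $\gamma_k$ must bisect $\mathrm{area}(H_k)$; for them to have equal perimeter, $\gamma_k$ must bisect the boundary of $H_k$, so that both pentagon perimeters equal $\tfrac12\,\mathrm{per}(H_k)+\ell_k$, where $\ell_k$ denotes the length of $\gamma_k$. A combinatorial check shows that in order to produce two pentagons via a single straight cut, $\gamma_k$ must connect two \emph{opposite} sides of $H_k$.

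The first step is a one-hexagon lemma: a generic convex hexagon $H$ with a designated pair of opposite sides admits a chord between those sides that simultaneously bisects area and perimeter. Existence is a continuity argument: one parametrizes such chords by the two endpoint positions $(s,t)$ on the chosen sides, notes that the signed area-difference and the signed boundary-arc-length-difference are continuous functions of $(s,t)$ changing sign as $(s,t)$ sweeps its rectangle, and applies an intermediate-value/winding argument. If $H$ has no non-trivial isometry fixing the chord, the two resulting pentagons are mutually incongruent; this is the generic situation, and in particular rules out centrally symmetric $H$.

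The second step is to produce the hexagonal tiling. A natural template is a layered deformation of the regular hexagonal honeycomb: starting from a central hexagon, one builds outward in concentric rings, letting each hexagon depend on shape parameters that are freely chosen subject to the adjacency conditions of the honeycomb. The two scalar conditions $\mathrm{area}(H_k)=2A$ and $\tfrac12\,\mathrm{per}(H_k)+\ell_k=P$ absorb two of these parameters per hexagon, and the remaining parameters can be chosen to vary strictly from hexagon to hexagon, so that no two pentagons --- either inside the same $H_k$ or across different $H_k$'s --- are congruent. This is in the spirit of the concentric construction used for quadrilaterals in \cite{fr2}.

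The main obstacle is orchestrating all of these requirements simultaneously: validity of the hexagonal tiling, the two fairness constraints per hexagon, and global pairwise incongruence of the pentagons. Unlike the quadrilateral case, the pentagon-producing cut is an interior chord rather than a pre-existing edge, so both the shape of each hexagon and the position of its chord must be tuned jointly, and the coupling is nonlinear. A careful ring-by-ring parameter count, balancing degrees of freedom against constraints while keeping the construction finite-to-finite at every stage, is the crux of the proof.
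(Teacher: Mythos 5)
Your overall strategy (tile by hexagons, then cut each into fair pentagons) is the same as the paper's, but your cutting step is genuinely different --- two pentagons via one chord instead of three pentagons meeting at an interior point --- and that is where the proposal has a real gap. The one-hexagon lemma, as stated, does not follow from the intermediate-value/winding argument you invoke, and the natural perturbative substitute fails. Take the regular hexagon of side $1$ and parametrize chords from a point $p(s)$ on a side to a point $q(t)$ on the opposite side. Because the regular hexagon is centrally symmetric, \emph{every} chord with $t=s$ passes through the center and bisects both area and boundary length; the solution set of the two equations (area difference $f(s,t)=0$, arc-length difference $g(s,t)=0$) is the whole diagonal, and a direct computation gives $\nabla f=(-\sqrt3,\sqrt3)$, $\nabla g=(-2,2)$ at $s=t=0$, so the $2\times2$ Jacobian is singular. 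Hence the implicit function theorem gives you nothing at the honeycomb template, and for a generic small perturbation of the hexagon the restriction of $g$ to the area-bisecting curve $\{f=0\}$ is a small function with no forced sign change: a simultaneous bisecting chord between the \emph{designated} pair of opposite sides may simply fail to exist. The classical fact that some line bisects both area and perimeter does not help, because that line may run near a vertex or meet a non-opposite pair of sides, in which case the two pieces are not both pentagons. This degeneracy also undermines the second step: your global constraint $\tfrac12\,\mathrm{per}(H_k)+\ell_k=P$ treats $\ell_k$ as a well-defined (smooth) function of the hexagon's shape, which it is not near the regular hexagon, precisely where your ring-by-ring deformation lives.

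Two smaller issues: the criterion ``the two pentagons are incongruent unless $H$ has an isometry fixing the chord'' is only a sufficient condition for congruence, not a characterization (an isometry matching the two pentagons need not carry the chord edge to the chord edge), and the global incongruence across hexagons is asserted by parameter counting without a mechanism for certifying it. For contrast, the paper sidesteps all of this by cutting each hexagon into \emph{three} pentagons meeting at an interior point: the interior structure contributes $8$ free coordinates against $8$ equations (three collinearity, two equal-area, three perimeter-$=u$ conditions), the relevant $8\times8$ Jacobian at the regular hexagon is computed symbolically to be nonzero, so the implicit function theorem genuinely applies; and incongruence is certified by marking, in the hexagon tiling, mutually incongruent side figures that reappear as the unique side of each pentagon joining its two angles near $2\pi/3$, hence must be matched by any congruence. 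If you want to salvage the two-pentagon route you would need either a non-perturbative existence argument for the chord together with control of $\ell_k$, or a family of hexagons far from central symmetry that still tiles the plane --- neither of which is supplied.
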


Our examples for the cases $n=4,5$ are not vertex-to-vertex. The following questions remain open. \emph{Are there fair vertex-to-vertex tilings by incongruent convex quadrangles (or pentagons)? Are there fair vertex-to-vertex tilings by incongrent convex quadrangles arbitrarily close to the vertex-to-vertex tiling by unit squares? Are there fair tilings by incongruent convex hexagons (vertex-to-vertex or not)? Are there fair tilings by incongruent convex hexagons arbitrarily close to the regular honeycomb tiling?} It seems to us that all these questions have positive answers. 


\section{Proof of Theorem~\ref{thm:pentagons}}

Our proof is based on the method of \emph{tiling a tile} \cite{f}: We start with a periodic tiling of the plane by clusters of seven regular hexagons of side length $1$; see the left-hand part of Figure~\ref{fig:1}. Lemma~\ref{lem:1} produces a certain tiling of the plane into hexagons by subdividing the clusters; see the right-hand part of Figure~\ref{fig:1}. Then Lemma~\ref{lem:2} splits each hexagon into three pentagons; see Figure~\ref{fig:2}.

Given $\varepsilon >0$, we call two real numbers $\varepsilon$-close if their absolute difference does not exceed $\varepsilon$.
Two convex $n$-gons are called $\varepsilon$-close if there is a bijection between their vertices such that the Euclidean distance between corresponding vertices is at most $\varepsilon$. 
Two tilings by convex $n$-gons are called $\varepsilon$-close if there is a bijection between them such that the distances between corresponding vertices of corresponding $n$-gons do not exceed $\varepsilon$.  

A side figure of a convex polygon consists of a side of that polygon together with the two adjacent inner angles. In particular, a side figure is determined up to congruence by the length of the side and the two sizes of the adjacent angles.

\begin{lem}\label{lem:1}
For every $\varepsilon > 0$, there exists a tiling of the plane such that
\begin{itemize}
\item[(i)]
the tiling is $\varepsilon$-close to a periodic tiling by regular hexagons of side length $1$,
\item[(ii)]
all tiles are hexagons of area $\frac{3\sqrt{3}}{2}$ (which is the area of a regular hexagon of side length $1$),
\item[(iii)]
in every tile three side figures over non-adjacent sides are marked,
\item[(iv)]
all marked side figures within the tiling are mutually incongruent.
\end{itemize}
\end{lem}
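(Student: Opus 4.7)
My plan is to perturb each seven-hexagon flower cluster of the initial periodic tiling independently, fixing the cluster boundaries, and to choose the perturbations inductively so that all marked side figures are mutually incongruent. Inside a flower $F$, I keep the eighteen vertices on the boundary of $F$ fixed and allow only the six corners of the central hexagon to move, giving twelve real parameters. Requiring each of the seven sub-hexagons to have area $\tfrac{3\sqrt 3}{2}$ gives seven equations, one of which is automatic from the fixed total area of $F$; the remaining six have full-rank Jacobian at the unperturbed configuration (a direct check). Hence the set $\mathcal M(F)$ of admissible convex area-preserving subdivisions near the standard one forms a smooth six-dimensional manifold.

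\emph{Marking and nondegeneracy.} Fix, by a $2$-coloring argument on the ring of outer hexagons, a marking of three non-adjacent sides in each sub-hexagon such that no edge of $F$ is marked twice. This produces $21$ marked side figures in $F$, each an analytic function of the six parameters. The key claims are: (a) each marked side figure is non-constant on $\mathcal M(F)$, and (b) for every pair of distinct marked sides $s,s'$, the locus in $\mathcal M(F)$ where the side figures of $s$ and $s'$ are congruent is a proper analytic subvariety. Both reduce, by analyticity and connectedness, to exhibiting at the standard configuration a single tangent direction that changes the relevant side figure(s) appropriately. I expect this to be the \emph{main technical obstacle}: it calls for a case analysis by how many interior vertices the two sides share; in each case one moves an interior vertex attached to $s$, compensates with the other interior vertices to preserve the seven areas, and verifies the first-order effect on each side figure.

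\emph{Inductive construction.} Enumerate the flowers $F_1, F_2, \ldots$. Suppose admissible subdivisions of $F_1, \ldots, F_{k-1}$ have been chosen, and let $\mathcal S_{k-1}$ be the finite set of $21(k-1)$ marked side figures already fixed. The bad subset of $\mathcal M(F_k)$ --- parameters producing a marked side figure of $F_k$ congruent to another in $F_k$ or to an element of $\mathcal S_{k-1}$ --- is a finite union of proper analytic subvarieties by (a) and (b), hence nowhere dense. Choose any parameter in its complement within distance $\varepsilon / 2^k$ of the standard configuration. The resulting tiling then satisfies (ii) and (iii) by construction, (iv) by the inductive avoidance, and (i) because every vertex is moved by at most $\varepsilon / 2^k \le \varepsilon$.
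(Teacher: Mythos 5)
Your overall strategy---perturbing each seven-hexagon cluster independently with its boundary held fixed, observing that the seven area constraints leave a six-dimensional manifold $\mathcal M(F)$ of admissible subdivisions, and choosing a point of $\mathcal M(F)$ inductively so as to avoid finitely many proper congruence loci---is exactly the strategy of the paper; the paper merely replaces your analyticity/genericity argument by an explicit recipe (choose the three free interior vertices one after another so that each newly created angle and side length differs from all previously created ones). In that respect the two proofs coincide, and your deferred claims (a) and (b) are precisely the verification the paper carries out informally.

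There is, however, a concrete error in your marking. Write the six sides of an outer hexagon of a cluster in cyclic order as $b_1,b_2,b_3,n_1,c,n_2$: three consecutive sides on the cluster boundary, one side shared with the central hexagon, and two sides shared with the neighbouring outer hexagons. The only two triples of pairwise non-adjacent sides are $\{b_1,b_3,c\}$ and $\{b_2,n_1,n_2\}$. The central hexagon must mark three alternating sides, each shared with some outer hexagon; your requirement that no edge of $F$ be marked twice then forces those three outer hexagons to choose $\{b_2,n_1,n_2\}$, hence to mark their middle boundary side $b_2$ (and forbids the remaining three from doing likewise, so the marking is essentially forced). But both endpoints of $b_2$ are cluster-boundary vertices incident to no interior edge, so once the eighteen boundary vertices are frozen the side figure over $b_2$ is completely rigid: length $1$ and both angles $\frac{2\pi}{3}$, at every point of $\mathcal M(F)$ and in every cluster. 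Your claim (a) is false for these sides, the corresponding congruence loci are all of $\mathcal M(F)$ rather than proper subvarieties, and infinitely many marked side figures of the final tiling are mutually congruent, so (iv) fails. The cure is to drop the ``no edge marked twice'' condition, as the paper does: let every outer hexagon mark $\{b_1,b_3,c\}$ and the central hexagon mark three alternating sides. Three interior edges then each carry two marked side figures, which is harmless, since a side figure records the two adjacent angles of its own tile and these differ generically on the two sides of a doubly marked edge; moreover every marked side figure now has at least one variable angle or a variable side length, so your claims (a) and (b) become plausible and the rest of your argument goes through.
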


\begin{proof}
We start by arranging the periodic tiling by regular hexagons of side length $1$ into clusters of seven hexagons; see the left-hand part of Figure~\ref{fig:1}.
\begin{figure}
\begin{center}
\begin{tikzpicture}[xscale=.15,yscale=.1299]

\foreach \x in {0,1,2,3} {
  \foreach \y in {0,1,2} {
    \draw
      (3*\x+9*\y,10*\x+2*\y) node {
        \begin{tikzpicture}[xscale=.15,yscale=.1299]
\begin{scope}[cm={0,-1,-1,0,(0,0)}]
        \draw[line width=.5mm]
                  (2,1)--(4,0)--(6,1)--(8,0)--(10,1)--(10,3)--(12,4)--(12,6)--(10,7)--(10,9)--(8,10)--(6,9)--(4,10)--(2,9)--(2,7)--(0,6)--(0,4)--(2,3)--cycle
          ;
        \draw[thin]
          (2,3)--(4,4) (2,7)--(4,6)--(4,4)--(6,3)--(6,1) (4,6)--(6,7) (6,3)--(8,4) (6,9)--(6,7)--(8,6)--(8,4)--(10,3) (8,6)--(10,7)
        ;
\end{scope}
        \end{tikzpicture}
      }
    ;
  }
}

\draw
  (58,17) node {
    \begin{tikzpicture}[xscale=.6,yscale=.5196]
\begin{scope}[cm={0,-1,-1,0,(0,0)}]
    \fill[lightgray]
      (3.5,.25)--(4,0)--(6,1)--(6,1.5)--cycle
      (6,1.5)--(6,1)--(8,0)--(8.5,.25)--cycle
      (3.5,9.75)--(4,10)--(6,9)--(6,8.5)--cycle
      (6,8.5)--(6,9)--(8,10)--(8.5,9.75)--cycle
      (2,1)--(2,3)--(2.5,3.25)--(2.5,.75)--cycle
      (2,9)--(2,7)--(2.5,6.75)--(2.5,9.25)--cycle
      (10,1)--(10,3)--(9.5,3.25)--(9.5,.75)--cycle
      (10,9)--(10,7)--(9.5,6.75)--(9.5,9.25)--cycle
      (0,4.5)--(0,4)--(2,3)--(2.5,3.25)--cycle
      (0,5.5)--(0,6)--(2,7)--(2.5,6.75)--cycle
      (12,4.5)--(12,4)--(10,3)--(9.5,3.25)--cycle
      (12,5.5)--(12,6)--(10,7)--(9.5,6.75)--cycle
      (4,4)--(4,6)--(3.5,6.25)--(3.5,3.75)--cycle
      (8,4)--(8,6)--(8.5,6.25)--(8.5,3.75)--cycle
      (4,4)--(6,3)--(6,2.5)--(3.5,3.75)--cycle
      (4,6)--(6,7)--(6,7.5)--(3.5,6.25)--cycle
      (8,4)--(6,3)--(6,2.5)--(8.5,3.75)--cycle
      (8,6)--(6,7)--(6,7.5)--(8.5,6.25)--cycle
      (4,4)--(4,6)--(4.5,6.25)--(4.5,3.75)--cycle
      (6,3)--(8,4)--(8,4.5)--(5.5,3.25)--cycle
      (6,7)--(8,6)--(8,5.5)--(5.5,6.75)--cycle
      ;
    \fill
      (8,4) circle (1.2mm)
      (4,4) circle (1.2mm)
      (6,7) circle (1.2mm)
      ;  
    \draw[line width = .5mm]
              (2,1)--(4,0)--(6,1)--(8,0)--(10,1)--(10,3)--(12,4)--(12,6)--(10,7)--(10,9)--(8,10)--(6,9)--(4,10)--(2,9)--(2,7)--(0,6)--(0,4)--(2,3)--cycle
      ;
    \draw[thin]
      (2,3)--(4,4) (2,7)--(4,6)--(4,4)--(6,3)--(6,1) (4,6)--(6,7) (6,3)--(8,4) (6,9)--(6,7)--(8,6)--(8,4)--(10,3) (8,6)--(10,7)
     (7.5,4.3) node {\large $A$}
     (8.8,3.2) node {$a$}
     (4.5,4.3) node {\large $B$}
     (7.2,3.2) node {$b$}
     (4.8,3.2) node {$b$}
     (2.8,3.8) node {$b$}
     (5.5,2) node {$b$}
     (6,6.5) node {\large $C$}
     (3.6,5) node {$c$}
     (8.4,5) node {$c$}
     (3.2,6.8) node {$c$}
     (4.8,6.8) node {$c$}
     (7.2,6.8) node {$c$}
     (9.2,6.2) node {$c$}
     (6.4,8) node {$c$}
     ;
\end{scope}
    \end{tikzpicture}
  }
  ;
\end{tikzpicture}
\end{center}
\caption{A tiling of the plane by congruent clusters of seven regular hexagons and perturbations within a single cluster.\label{fig:1}}
\end{figure}
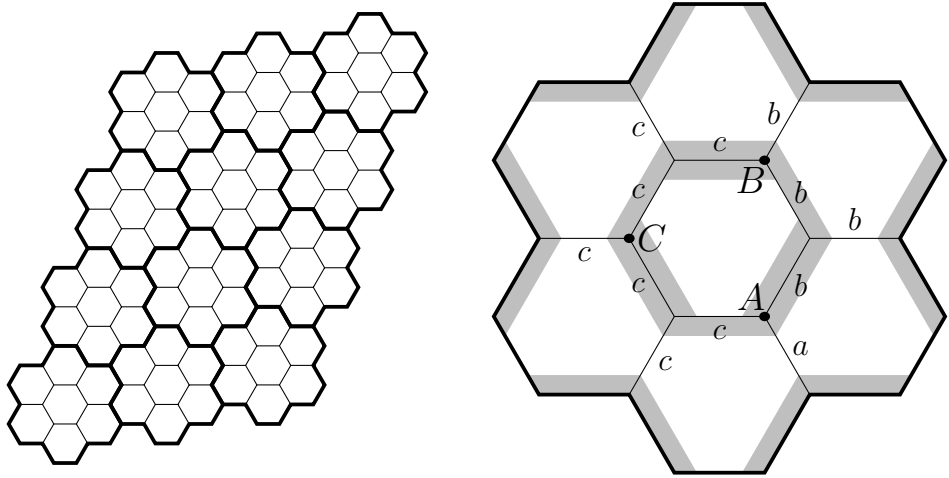
The final tiling shall be obtained by perturbing the original one within every single cluster.
That perturbation is done inductively, cluster by cluster. In every cluster we mark $21$ side figures as in the right-hand part of Figure~\ref{fig:1}. Then we perturb the coordinates of the points $A$, $B$ and $C$ by sufficiently small amounts. We keep the boundary of the cluster as well as the topology of the tiling within the cluster unchanged. The choice of $A$ fixes the side marked by $a$. By condition (ii), the choice of $B$ fixes the sides marked by $b$, and $C$ fixes those marked by $c$. This ensures (i), (ii) and (iii). 

In order to obtain (iv), we do the perturbations within each cluster such that the marked side figures of the resulting tiles are mutually incongruent as well as incongruent to those in the previously perturbed clusters. For side figures on the boundary of the cluster this incongruence is based on the sizes of their inner angles. The possibility of changing side lengths gives even more flexibility for the shapes of the other side figures. In particular, $A$ is chosen such that the two angles of the side marked by $a$ with the boundary of the cluster differs from all angles in previously fixed side figures. Then $B$ is chosen such that the two sides marked by $b$ who emanate from the boundary have angles with the boundary again different from all previously fixed angles, and that the sides with mark $b$ in the interior of the cluster constitute an angle different from all previously fixed angles and get mutually different sizes that are different from the side lengths in all previously fixed side figures. The same has to be done with $C$. It is a useful strategy to fix $A,B,C$ such that the perturbations of all critical angles and lengths are smaller than all those who were obtained in previously perturbed clusters.
\end{proof}

\begin{lem}\label{lem:2}
For every $\mu > 0$, there exists $\varepsilon > 0$ such that the following is satisfied. Let $H$ be a hexagon that is $\varepsilon$-close to a regular hexagon of side length $1$ and let three side figures over non-adjacent sides of $H$ be marked. Then $H$ splits into three pentagons of the same area and of perimeter $u=2+3\sqrt{2}-\sqrt{6}$, each one possessing one of the marked side figures of $H$. The sizes of the inner angles of each pentagon, in successive order, are $\mu$-close to $\frac{2\pi}{3}$, $\frac{2\pi}{3}$, $\frac{7\pi}{12}$, $\frac{2\pi}{3}$ and $\frac{5\pi}{12}$, where the sides between inner angles of sizes $\mu$-close to $\frac{2\pi}{3}$ represent the marked side figures of $H$.
\end{lem}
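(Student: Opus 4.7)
The plan is to split $H$ by fixing an interior point $O$ and three points $p_2, p_4, p_6$, one on each unmarked side, and cutting along the three segments $Op_2, Op_4, Op_6$. This yields three pentagons $P_1, P_3, P_5$, where $P_j$ consists of the $j$-th marked side as a single full edge, two partial unmarked sides, and the two interior cuts --- five edges in total. The two inner angles of $P_j$ at the endpoints of its marked side are inherited from the inner angles of $H$ at those vertices, hence automatically $\mu$-close to $2\pi/3$ once $H$ is sufficiently close to regular. The construction carries five real parameters (two coordinates of $O$ and one scalar per $p_i$) and must satisfy five scalar conditions: two independent equal-area conditions together with three perimeter-equals-$u$ conditions.

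I would first verify the construction for the regular hexagon $H_0$ of side $1$, using the rotationally symmetric ansatz with $O$ at the centre and each $p_i$ at the same relative position $t$ on its unmarked side. A short coordinate computation gives that $t = 2-\sqrt{3}$ realises the prescribed angles: at the hexagon vertices on the marked side the angle is $2\pi/3$ (inherited); at $O$ it is $2\pi/3$ by symmetry, since three congruent pentagons meet there; and at the two $p_i$-vertices of $P_1$ arctangent computations yield $7\pi/12$ and $5\pi/12$. The five side lengths of $P_1$ then work out to $1,\ 2-\sqrt{3},\ \tfrac{3\sqrt{2}-\sqrt{6}}{2},\ \tfrac{3\sqrt{2}-\sqrt{6}}{2},\ \sqrt{3}-1$, summing exactly to $u=2+3\sqrt{2}-\sqrt{6}$, and each $\mathrm{area}(P_j)=\tfrac{\sqrt{3}}{2}$ holds by $\mathbb{Z}_3$-symmetry. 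This is the ``base solution'' to which the general case will be perturbed.

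To handle a general $H$ that is $\varepsilon$-close to $H_0$, I would introduce the smooth constraint map $\Phi\colon \mathbb{R}^5 \to \mathbb{R}^5$ sending the five parameters to the two area differences $\mathrm{area}(P_1)-\mathrm{area}(P_3)$, $\mathrm{area}(P_3)-\mathrm{area}(P_5)$, together with the three perimeter deviations $\mathrm{perim}(P_j)-u$ for $j\in\{1,3,5\}$, and apply the implicit function theorem at the base zero. Provided $D\Phi$ at the base is invertible, the implicit function theorem gives a smooth family of parameter choices solving $\Phi=0$ for all $H$ in a neighbourhood of $H_0$, in particular for all $H$ that are $\varepsilon$-close to regular once $\varepsilon$ is small enough; this yields the required pentagons of area $\tfrac{\sqrt{3}}{2}$ and perimeter $u$. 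The $\mu$-closeness of the five inner angles of each pentagon then follows from joint continuity in the parameters and in $H$, since at the base the angles equal the targets exactly, so shrinking $\varepsilon$ enforces any desired $\mu$.

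The main technical step is the invertibility of $D\Phi$. Because the base configuration has full $\mathbb{Z}_3$-symmetry, $D\Phi$ is $\mathbb{Z}_3$-equivariant, and both the parameter space and the constraint space decompose as a trivial representation $\oplus$ the standard two-dimensional representation of $\mathbb{Z}_3$; the Jacobian therefore block-decomposes into a $1\times 1$ scalar in the trivial sector and a $2\times 2$ block in the standard sector. The scalar records the derivative of the common perimeter with respect to the common parameter $t$, and a short calculation gives the value $(\sqrt{2}-\sqrt{6})/2\neq 0$ at $t=2-\sqrt{3}$. The $2\times 2$ block measures how a translation of $O$ and an antisymmetric perturbation of the $p_i$ trade off area and perimeter between the three pentagons; verifying its nonsingularity at the base configuration is the one step not immediately transparent from the symmetry analysis, and is where the proof would demand the most care.
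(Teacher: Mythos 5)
Your construction is the same as the paper's: cut from a near-central point $O$ to one point on each unmarked side, keep each marked side as a whole edge of one pentagon, verify everything exactly at the regular hexagon with $t=2-\sqrt{3}$ (your base data --- the side lengths $1$, $2-\sqrt 3$, $\sqrt3-1$, the two cut lengths $\tfrac{3\sqrt2-\sqrt6}{2}$, and the angles --- all agree with the paper's $v^0$), and then apply the implicit function theorem. The only real difference is bookkeeping: the paper keeps all eight coordinates $a_0,\dots,b_3$ as unknowns and adds three collinearity equations, obtaining an $8\times 8$ Jacobian, whereas you eliminate the collinearity constraints by parametrizing each cut point by a scalar along its side, obtaining a $5\times5$ system. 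The two setups are equivalent.

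The gap is the one you yourself flag: the invertibility of $D\Phi$ at the base configuration is never established, and this is precisely the point on which the whole lemma turns --- if that Jacobian were singular, the implicit function theorem would give nothing and the perturbation argument would collapse. The paper settles exactly this step by a symbolic computation (Maple), finding the $8\times8$ determinant equal to $\frac{-162+486\sqrt2+81\sqrt3-270\sqrt6}{8}\neq 0$; some such verification, by hand or by computer, is unavoidable. Your $\mathbb{Z}_3$-equivariance reduction is a genuinely nice idea that could make the check hand-computable, and your trivial-sector scalar $\frac{\sqrt2-\sqrt6}{2}$ is correct (the symmetric perimeter is $2+2\sqrt{t^2-t+1}$, whose derivative at $t=2-\sqrt3$ is indeed $\frac{\sqrt2-\sqrt6}{2}$). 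But note that the standard representation occurs with multiplicity \emph{two} in both the parameter space ($O$ contributes one copy, the sum-zero part of $(t_1,t_2,t_3)$ another) and the constraint space (the two area differences, plus the sum-zero part of the perimeters), so the remaining block is a $2\times2$ \emph{complex} matrix (a $4\times4$ real block), and its nonsingularity is a substantive computation that you have not carried out. Until that determinant is evaluated and shown nonzero, the proof is incomplete.
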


\begin{proof}
Figure~\ref{fig:2} illustrates a dissection of a hexagon $H$ with three marked side figures into three pentagons $P_1$, $P_2$ and $P_3$.
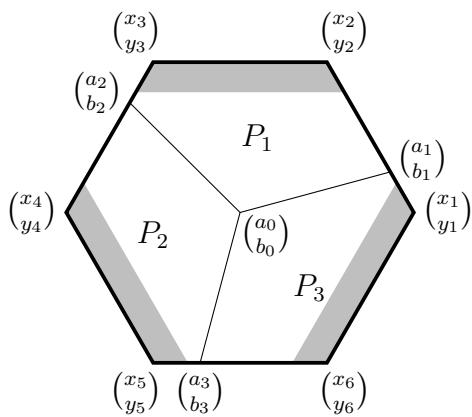
\begin{figure}
\begin{center}
\begin{tikzpicture}[xscale=1.155,yscale=1]

\fill[lightgray]
  (3.6,0)--(4,0)--(5,2)--(4.8,2.4)--cycle
  (2.4,0)--(2,0)--(1,2)--(1.2,2.4)--cycle
  (1.8,3.6)--(2,4)--(4,4)--(4.2,3.6)--cycle
  ;

\draw[line width = .5mm] 
  (2,0)--(4,0)--(5,2)--(4,4)--(2,4)--(1,2)--cycle
  ;
  
\draw  
  (2.54,0)--(3,2)--(1.73,3.46)
  (3,2)--(4.73,2.54)
  (3.3,1.7) node {$\genfrac(){0pt}{}{a_0}{b_0}$}
  (5.4,2) node {$\genfrac(){0pt}{}{x_1}{y_1}$}
  (5.1,2.7) node {$\genfrac(){0pt}{}{a_1}{b_1}$}
  (4.2,4.4) node {$\genfrac(){0pt}{}{x_2}{y_2}$}
  (1.8,4.4) node { $\genfrac(){0pt}{}{x_3}{y_3}$}
  (1.35,3.6) node {$\genfrac(){0pt}{}{a_2}{b_2}$}
  (.6,2) node {$\genfrac(){0pt}{}{x_4}{y_4}$}
  (1.8,-.4) node {$\genfrac(){0pt}{}{x_5}{y_5}$}
  (2.54,-.4) node {$\genfrac(){0pt}{}{a_3}{b_3}$}
  (4.2,-.4) node {$\genfrac(){0pt}{}{x_6}{y_6}$}
  (3.2,3) node {$P_1$}
  (2,1.7) node {$P_2$}
  (3.8,1) node {$P_3$}
  ;
\end{tikzpicture}
\end{center}
\caption{A dissection of a nearly regular hexagon into pentagons.\label{fig:2}}
\end{figure}
It is described by the vector of parameters
\[
v=(x_1,\ldots,x_6,y_1,\ldots,y_6,a_0,\ldots,a_3,b_0,\ldots,b_3) \in \mathbb{R}^{20}.
\]
The displayed situation is the unperturbed one, based on $v=v^0=\left(x_1^0,\ldots,b_3^0\right)$ with
\begin{eqnarray*}
&\left(x_i^0,y_i^0\right)=\left(\cos \frac{(i-1)\pi}{3}, \sin \frac{(i-1)\pi}{3}\right) \text{ for } i=1,\ldots,6,\quad \left(a_0^0,b_0^0\right)=(0,0),\\ 
& \left(a_i^0,b_i^0\right)=\frac{3\sqrt{2}-\sqrt{6}}{2}\left(\cos\frac{(1+8(i-1))\pi}{12},\sin\frac{(1+8(i-1))\pi}{12}\right) \text{ for } i=1,2,3,\\
& \text{i.e.}\quad \left(a_1^0,b_1^0\right)=\left(\frac{\sqrt{3}}{2},\frac{-3+2\sqrt{3}}{2}\right),\quad
\left(a_2^0,b_2^0\right)=\left(\frac{-3+\sqrt{3}}{2},\frac{3-\sqrt{3}}{2}\right),\quad
\left(a_3^0,b_3^0\right)=\left(\frac{3-2\sqrt{3}}{2},\frac{-\sqrt{3}}{2}\right).
\end{eqnarray*}
Then $H$ is regular with side length $1$, the angles between the sides $\left(\genfrac{}{}{0pt}{}{a_0^0}{b_0^0}\right)\left(\genfrac{}{}{0pt}{}{a_i^0}{b_i^0}\right)$, $i=1,2,3$, and the respective diagonals of $H$ are of size $\frac{\pi}{12}$, and the pentagons are congruent and have perimeter $u$. We shall see that, for all $(x_1,\ldots,x_6,y_1,\ldots,y_6)$ sufficiently close to $\left(x_1^0,\ldots,x_6^0,y_1^0,\ldots,y_6^0\right)$, there exist values $(a_0,\ldots,a_3,b_0,\ldots,b_3)$ close to $\left(a_0^0,\ldots,a_3^0,b_0^0,\ldots,b_3^0\right)$ describing a dissection into pentagons $P_1$, $P_2$, $P_3$ of the same area and with perimeter $u$.

We formulate our claim in terms of equations $f_i(v)=0$, $i=1,\ldots,8$. First note that $\genfrac(){0pt}{}{a_1}{b_1}$ lies on the straight line through $\genfrac(){0pt}{}{x_1}{y_1}$ and $\genfrac(){0pt}{}{x_2}{y_2}$; i.e., $\det\left(\genfrac(){0pt}{}{a_1}{b_1}-\genfrac(){0pt}{}{x_1}{y_1},\genfrac(){0pt}{}{x_2}{y_2}-\genfrac(){0pt}{}{x_1}{y_1}\right)=0$. This amounts to
\begin{equation}\label{eq:f1}
f_1(v)=(a_1-x_1)(y_2-y_1)-(x_2-x_1)(b_1-y_1)=0.
\end{equation}
Similarly,
\begin{align}
\label{eq:f2}
f_2(v)=(a_2-x_3)(y_4-y_3)-(x_4-x_3)(b_2-y_3)=0, \\
\label{eq:f3}
f_3(v)=(a_3-x_5)(y_6-y_5)-(x_6-x_5)(b_3-y_5)=0.
\end{align}
The (signed) area of a convex pentagon $p_1,\ldots,p_5$ is 
\[
\text{area}(p_1,\ldots,p_5)= \frac{1}{2}(\det(p_2-p_1,p_3-p_1)+\det(p_3-p_1,p_4-p_1)+\det(p_4-p_1,p_5-p_1)).
\]
The equality of the areas of the pentagons $P_1$, $P_2$ and $P_3$ of our dissection is expressed by the equations
\begin{align}
\label{eq:f4}
f_4(v)=\text{area}\left(\genfrac(){0pt}{1}{a_0}{b_0},\genfrac(){0pt}{1}{a_2}{b_2},\genfrac(){0pt}{1}{x_4}{y_4},\genfrac(){0pt}{1}{x_5}{y_5},\genfrac(){0pt}{1}{a_3}{b_3}\right)-\text{area}\left(\genfrac(){0pt}{1}{a_0}{b_0},\genfrac(){0pt}{1}{a_1}{b_1},\genfrac(){0pt}{1}{x_2}{y_2},\genfrac(){0pt}{1}{x_3}{y_3},\genfrac(){0pt}{1}{a_2}{b_2}\right)=0, \\
\label{eq:f5}
f_5(v)=\text{area}\left(\genfrac(){0pt}{1}{a_0}{b_0},\genfrac(){0pt}{1}{a_3}{b_3},\genfrac(){0pt}{1}{x_6}{y_6},\genfrac(){0pt}{1}{x_1}{y_1},\genfrac(){0pt}{1}{a_1}{b_1}\right)-\text{area}\left(\genfrac(){0pt}{1}{a_0}{b_0},\genfrac(){0pt}{1}{a_1}{b_1},\genfrac(){0pt}{1}{x_2}{y_2},\genfrac(){0pt}{1}{x_3}{y_3},\genfrac(){0pt}{1}{a_2}{b_2}\right)=0.
\end{align}
The perimeter of a pentagon $p_1,\ldots,p_5$ is 
\[
\text{perim}(p_1,\ldots,p_5)=\|p_2-p_1\|+\|p_3-p_2\|+\|p_4-p_3\|+\|p_5-p_4\|+\|p_1-p_5\|,
\]
where $\|\cdot\|$ denotes the Euclidean norm. Now the coincidences of the perimeters of $P_1$, $P_2$ and $P_3$ with $u$ read as
\begin{align}
\label{eq:f6}
f_6(v)=\text{perim}\left(\genfrac(){0pt}{1}{a_0}{b_0},\genfrac(){0pt}{1}{a_1}{b_1},\genfrac(){0pt}{1}{x_2}{y_2},\genfrac(){0pt}{1}{x_3}{y_3},\genfrac(){0pt}{1}{a_2}{b_2}\right)-u=0,\\
\label{eq:f7}
f_7(v)=\text{perim}\left(\genfrac(){0pt}{1}{a_0}{b_0},\genfrac(){0pt}{1}{a_2}{b_2},\genfrac(){0pt}{1}{x_4}{y_4},\genfrac(){0pt}{1}{x_5}{y_5},\genfrac(){0pt}{1}{a_3}{b_3}\right)-u=0, \\
\label{eq:f8}
f_8(v)=\text{perim}\left(\genfrac(){0pt}{1}{a_0}{b_0},\genfrac(){0pt}{1}{a_3}{b_3},\genfrac(){0pt}{1}{x_6}{y_6},\genfrac(){0pt}{1}{x_1}{y_1},\genfrac(){0pt}{1}{a_1}{b_1}\right)-u=0.
\end{align}

One easily checks that the equations \eqref{eq:f1}--\eqref{eq:f8} are satisfied for $v=v^0$.
Moreover, symbolic calculations of a computer algebra system, such as Maple 2019, show that
\[
\det\left(
\begin{array}{cccccc}
\frac{\partial f_1}{\partial a_0}(v^0) &
\cdots&
\frac{\partial f_1}{\partial a_3}(v^0) &
\frac{\partial f_1}{\partial b_0}(v^0) &
\cdots&
\frac{\partial f_1}{\partial b_3}(v^0) \\
\vdots && \vdots & \vdots && \vdots\\
\frac{\partial f_8}{\partial a_0}(v^0) &
\cdots&
\frac{\partial f_8}{\partial a_3}(v^0) &
\frac{\partial f_8}{\partial b_0}(v^0) &
\cdots&
\frac{\partial f_8}{\partial b_3}(v^0)
\end{array}
\right)= \frac{-162 + 486\sqrt{2} + 81\sqrt{3} - 270\sqrt{6}}{8} \ne 0.
\]
Thus the implicit function theorem says that the system of Equations \eqref{eq:f1}--\eqref{eq:f8} has a unique solution for $a_0,\ldots,a_3,b_0,\ldots,b_3$ depending on $x_1,\ldots,x_6,y_1,\ldots,y_6$ and that $\left(a_0,\ldots,a_3,b_0,\ldots,b_3\right)$ is arbitrarily close to $\left(a_0^0,\ldots,a_3^0,b_0^0,\ldots,b_3^0\right)$, whenever $(x_1,\ldots,x_6,y_1,\ldots,y_6)$ is sufficiently close to $\left(x_1^0,\ldots,x_6^0,y_1^0,\ldots,y_6^0\right)$. That is, an arbitrary, but sufficiently small perturbation of the regular hexagon with vertices $\left(\genfrac{}{}{0pt}{}{x_1^0}{y_1^0}\right),\ldots,\left(\genfrac{}{}{0pt}{}{x_6^0}{y_6^0}\right)$ provides a respectively small perturbed version of the original tiling with parameters $v^0$. Indeed, the closeness to $v^0$ and Equations \eqref{eq:f1}--\eqref{eq:f3} guarantee that the perturbed (given and dependent) parameters describe a tiling into three pentagons. Moreover, the inner angles of the pentagons are $\mu$-close to $\frac{2\pi}{3}$, $\frac{2\pi}{3}$, $\frac{7\pi}{12}$, $\frac{2\pi}{3}$ and $\frac{5\pi}{12}$, provided the perturbation is sufficiently small. Equations \eqref{eq:f4} and \eqref{eq:f5} ensure that the tiles are of the same area. By Equations \eqref{eq:f6}--\eqref{eq:f8}, the pentagons have perimeter $u$. 
\end{proof}

\begin{proof}[Proof of Theorem~\ref{thm:pentagons}]
We apply Lemma~\ref{lem:2} to all hexagons of a tiling obtained by Lemma~\ref{lem:1}. This gives a tiling of the plane by convex pentagons of area $\frac{\sqrt{3}}{2}$ and of perimeter $2+3\sqrt{2}-\sqrt{6}$. If $\mu$ is sufficiently small, congruence of two pentagons implies that their side figures at the sides connecting inner angles of sizes $\mu$-close to $\frac{2\pi}{3}$ must be congruent. But all these side figures are marked side figures of the tiling given by Lemma~\ref{lem:1} and in turn mutually incongruent.
\end{proof}


\section*{Acknowledgments}
Both authors express their gratitude to R.\ Nandakumar for providing interesting problems.




\begin{thebibliography}{KPT2}

\bibitem[DMO]{dmo}
E.D.\ Demaine, J.S.B.\ Mitchell, J.\ O'Rourke: \emph{The Open Problems Project}, Problem 67, \\
{\tt https://topp.openproblem.net/p67} (visited in August 2021).

\bibitem[F]{f}
D.\ Frettl\"oh: Noncongruent equidissections of the plane,
in: M.\ Conder, A.\ Deza, A.\ Weiss (eds.): 
\emph{Discrete Geometry and Symmetry}, Springer Proc.\ Math.\ Stat.\ 234,
Springer (2018) 171--180. 

\bibitem[FR1]{fr1}
D.\ Frettl\"oh, C.\ Richter: Incongruent equipartitions of the plane, \emph{European J.\ Combin.}\ 87 (2020), 103129, 13 pp.

\bibitem[FR2]{fr2}
D.\ Frettl\"oh, C.\ Richter: Incongruent equipartitions of the plane into quadrangles of equal perimeters, \emph{J.\ Combin.\ Theory Ser.\ A} 182 (2021), Paper No. 105461, 25 pp.


\bibitem[KPT1]{kpt1}
A.\ Kupavskii, J.\ Pach, G.\ Tardos:
Tilings of the plane with unit area triangles of bounded diameter,
\emph{Acta Math.\ Hungar.}\ 155 (2018) 175--183.

\bibitem[KPT2]{kpt2}
A.\ Kupavskii, J.\ Pach, G.\ Tardos:
Tilings with noncongruent triangles,
\emph{European J.\ Combin.}\ 73 (2018) 72--80.

\bibitem[N]{nblog2}
R.\ Nandakumar: {\tt https://nandacumar.blogspot.com} (visited in August 2021).

  
\end{thebibliography}
\end{document}